\theoremstyle{plain}
\newtheorem{thm}{Theorem}[section]
\newtheorem{pro}[thm]{Proposition}
\theoremstyle{definition}
\newtheorem{defn}[thm]{Definition}
\newtheorem{ex}[thm]{Example}
\newtheorem{rem}[thm]{Remark}
\numberwithin{equation}{section}
\newcommand{\R}{\mathbb{R}}
\newcommand{\N}{\mathbb{N}}
\begin{document}

\title[Positive solutions for elliptic systems]{Nonzero positive solutions of a multi-parameter elliptic system with functional BCs}  

\date{}

\author[G. Infante]{Gennaro Infante}
\address{Gennaro Infante, Dipartimento di Matematica e Informatica, Universit\`{a} della
Calabria, 87036 Arcavacata di Rende, Cosenza, Italy}%
\email{gennaro.infante@unical.it}%

\begin{abstract} 
We prove, by topological methods, new results on the existence of nonzero positive weak solutions for a class of multi-parameter second order elliptic systems subject to functional boundary conditions. The setting is fairly general and covers the case of multi-point, integral and nonlinear boundary conditions. We also present a non-existence result. We provide some examples to illustrate the applicability our theoretical results.
\end{abstract}

\subjclass[2010]{Primary 35J47, secondary 35B09, 35J57, 35J60, 47H10}

\keywords{Positive solution, elliptic system, functional boundary condition, cone, fixed point index}

\maketitle

\section{Introduction}
In this paper we discuss the solvability of the multi-parameter system of second order elliptic equations subject to functional boundary conditions
\begin{equation}
  \label{ellbvp-intro}
 \left\{
\begin{array}{ll}
   L_iu_i(x)=\lambda_i f_i(x,u(x)), &  x\in \Omega,\quad i=1,2,\ldots,n, \\
 B_iu_i(x)=\eta_i h_{i} [u], & x\in \partial \Omega,\quad i=1,2,\ldots,n,
\end{array}
\right.
\end{equation}
where  $\Omega\subset \R^m$ ($m\geq 2$) is a bounded domain with sufficiently regular boundary, $L_i$ is a strongly uniformly elliptic operator, $B_i$ is a first order boundary operator, 
 $u=(u_1,\dots, u_n)$, $f_i$ is a continuous function, $h_{i}$ is a suitable compact functional, $\lambda_i, \eta_i$ are parameters.

A motivation for studying this kind of boundary value problems (BVPs) is that they often occur in physical applications. In order to illustrate this fact, take $n=1$, $m=2$  and consider the BVP
\begin{equation}
  \label{ellbvp-intro-ex}
 \left\{
\begin{array}{ll}
-\Delta u(x)= f(x, u(x)), & \|x\|_2<1, \\
u(x)= \eta u(0), & \|x\|_2=1,%
\end{array}
\right.
\end{equation}
where $\|\cdot\|_2$ is the Euclidean norm.
The BVP~\eqref{ellbvp-intro-ex}
can be used as a model for the steady-states of the temperature of a heated disk of radius~$1$, where a controller located in the border of the disk
adds or removes heat in manner proportional to the temperature registered by a sensor located in the center of the disk.  In the context of ODEs, a good reference 
for this kind of thermostat problems is the recent paper~\cite{webb-therm}.

The assumptions we make on the functionals $h_{i}$ that occur in~\eqref{ellbvp-intro} are fairly weak and allow to cover, \emph{for example}, the special cases of \emph{multi-point} boundary conditions (BCs) of the form
\begin{equation}\label{mpointbcs}
h_{i}[u]=\sum_{k=1}^{n} \sum_{j=1}^{N}\hat{\alpha}_{ijk}u_k(\omega_j),
\end{equation}
where $\hat{\alpha}_{ijk}$ are non-negative coefficients and $\omega_j\in \Omega$, or \emph{integral} BCs of the type
\begin{equation}\label{intbcs}
h_{i}[u]=\sum_{k=1}^{n}\int_{\Omega}\hat{\alpha}_{ik}(\omega)u_k(\omega)\,d\omega,
\end{equation}
where $\hat{\alpha}_{ik}$ are non-negative continuous functions on $\overline{\Omega}$. Note that the functionals $h_{i}$ in~\eqref{mpointbcs} and~\eqref{intbcs} allow an interaction between the components of the solution. 

There exists a wide literature on multi-point, integral and, more in general, nonlocal BCs. As far as we know multi-point BCs have been studied firstly by Picone~\cite{Picone} in the context of ODEs. For an introduction to nonlocal BCs, we refer the reader to the
reviews~\cite{Conti, rma, sotiris, Stik, Whyburn} and the papers~\cite{kttmna, ktejde, Pao-Wang, jw-gi-jlms}.

Note that our approach is not restricted to \emph{linear} functionals like~\eqref{mpointbcs} and~\eqref{intbcs}, we may also deal with the case of \emph{nonlinear} BCs. These type of BCs also make physical sense; for example the BVP~\eqref{ellbvp-intro-ex} might be modified in order to take into account a nonlinear response of the controller, by having a nonlinear, nonlocal BC of the form 
\begin{equation}\label{nonlin-nonloc-bc}
u(x)= \hat{h}(u(0)),\quad x\in \partial \Omega,
\end{equation}
where $\hat{h}$ is a continuous function. In the context of radial solutions of PDEs on annular domains, 
conditions similar to~\eqref{nonlin-nonloc-bc} have been investigated recently in~\cite{genupa, go-mi-ro, dunn-wang, Goodrich1, Goodrich2}. We stress that nonlinear BCs have been widely studied for different classes of differential equations, nonlinearities and domains, we refer the reader to~\cite{Amann-rev, Cabada1, genupa, Goodrich3, Mawin-Schm, Pao, rma2, yz-mw} and references therein; in particular, the method of upper and lower solutions has been employed for the System~\eqref{ellbvp-intro} in the case of \emph{non-homogeneus} (not necessarily constant) BCs in~\cite{Amann-rev} and in the case of nonlinear BCs (when $\lambda_i=\eta_i=1$) in~\cite{rma2, Pao}.

We highlight that the existence of positive solutions of the System~\eqref{ellbvp-intro} with \emph{homogeneous} BCs has been recently discussed  in~\cite{lan1,lan2} (in the \emph{sublinear} case) and in~\cite{Cid-Infante} (under monotonicity assumptions on the nonlinearities). Our theory can be applied also in this case, by considering $h_{i}[u]\equiv 0$. We do not assume global restrictions on the growth nor we assume monotonicity of the nonlinearities, thus complementing the results in~\cite{Cid-Infante, lan1, lan2}.

We prove, by means of classical fixed point index, the existence of one nontrivial weak solution of the System~\eqref{ellbvp-intro}. We also prove, via an elementary argument, a non-existence result. 
We provide some examples in order to illustrate the applicability of our theoretical results.

\section{Existence and non-existence results}
We make the following assumptions on the domain $\Omega$ and the operators $L_i$ and $B_i$ that occur in~\eqref{ellbvp-intro}
 (see \cite[Section~4 of Chapter~1]{Amann-rev} and \cite{lan1,lan2})):

\begin{enumerate}
\item $\Omega\subset \R^m$, $m\ge 2$, is a bounded domain such that its boundary $\partial \Omega$ is an $(m-1)$-dimensional $C^{2+\hat{\mu}}-$manifold for some $\hat{\mu}\in (0,1)$, such that $\Omega$ lies locally on one side of $\partial \Omega$ (see \cite[Section 6.2]{zeidler} for more details).
\item $L_i$ is a the second order elliptic operator given by
\begin{equation*}
L_i u(x)=-\sum_{j,l=1}^m a_{ijl}(x)\frac{\partial^2 u}{\partial x_j \partial x_l}(x)+\sum_{j=1}^m a_{ij}(x) \frac{\partial u}{\partial x_j} (x)+a_i (x)u(x), \quad \mbox{for $x\in \Omega$,}
\end{equation*}
where $a_{ijl},a_{ij},a_i\in C^{\hat{\mu}}(\overline{\Omega})$ for $j,l=1,2,\ldots,m$, $a_i(x)\ge 0$ on $\bar{\Omega}$, $a_{ijl}(x)=a_{ijl}(x)$ on $\bar{\Omega}$ for $j,l=1,2,\ldots,m$.  Moreover $L_i$ is strongly uniformly elliptic, that is, there exists $\bar{\mu}_{i0}>0$ such that 
$$\sum_{j,l=1}^m a_{ijl}(x)\xi_j \xi_l\ge \bar{\mu}_{i0} \|\xi\|^2 \quad \mbox{for $x\in \Omega$ and $\xi=(\xi_1,\xi_2,\ldots,\xi_m)\in\R^m$.}$$

\item $B_i$ is a boundary operator given by
$$B_i u(x)=b_i(x)u(x)+\delta_i \frac{\partial u}{\partial \nu}(x) \quad \mbox{for $x\in\partial \Omega$},$$
where $\nu$ is an outward pointing and nowhere tangent vector field on $\partial \Omega$ of class $C^{1+\hat{\mu}}$ (not necessarily a unit vector field),  $\frac{\partial u}{\partial \nu}$ is the directional derivative of $u$ with respect to $\nu$,  $b_i:\partial \Omega \to \R$ is of class $C^{1+\hat{\mu}}$ and moreover one of the following conditions holds:
\begin{enumerate}
\item $\delta_i =0$ and $b_i(x)\equiv 1$ (Dirichlet boundary operator).
\item $\delta_i =1$, $b_i(x)\equiv 0$ and $a_i(x)\not\equiv 0$ (Neumann boundary operator).
\item $\delta_i =1$, $b_i(x)\ge 0$ and $b_i(x)\not\equiv 0$ (Regular oblique derivative boundary operator).
\end{enumerate}
\end{enumerate}

It is known (see \cite{Amann-rev}, Section 4)  that, under the previous conditions, a strong maximum principle holds and, furthermore, 
given $g\in C^{\hat{\mu}}(\bar{\Omega})$, the boundary value problem 
 \begin{equation}
  \label{eqelliptic}
 \left\{
\begin{array}{ll}
   L_{i}u(x)=g(x), &  x\in \Omega, \\
 B_{i}u(x)=0, & x\in \partial \Omega,
\end{array}
\right.
\end{equation}
admits a unique classical solution $u\in C^{2+\hat{\mu}}(\bar{\Omega})$.

In order to seek solutions of the System~\eqref{ellbvp-intro}, we work in a suitable cone of positive functions. We recall that a \emph{cone} $P$ of a
real Banach space $X$ is a closed set with $P+P\subset P$,
  $\lambda P\subset P$ for all $\lambda\ge 0$ and  $P\cap(-P)=\{0\}$. A cone $P$ induces a partial ordering in
$X$ by means of the relation
$$ 
x\le y \quad \mbox{if and only if $y-x\in P$}.
$$

The cone $P$ is {\it normal} if there exists $d>0$ such that for all $x, y\in X$ with $0 \le x\le y$ then
$\|x\|\le d \|y\| $.
Note that every (closed) cone $P$ has the {\it Archimedean property}, that is, $n x\le y$ for all $n\in \N$ and some $y\in X$ implies $x\le 0$. In what follows, with abuse of notation, we will use the same symbol ``$\ge$" for the different cones appearing in the paper.

Now consider the (normal) cone of non-negative functions $P=C(\bar{\Omega},\R_+)$, then the solution operator $K_i:C^{\hat{\mu}}(\bar{\Omega})\to C^{2+\hat{\mu}}(\bar{\Omega})$ defined as $K_{i}g=u$ is linear, continuous and  (due to the maximum principle) positive, that is 
$K_{i}(P)\subset P$. It is known that $K$ can be extended uniquely to a continuous, linear and compact operator $K_{i}:C(\bar{\Omega})\to C(\bar{\Omega})$ (that we denote again by the same name). The following result (see \cite[Lemma 5.3]{amann-JFA}) provides further positivity properties of 
 the generalized solution operator.
 \begin{pro} \label{proK2} Let $e_i=K_{i}1\in C(\bar{\Omega},\R_+)\setminus \{0\}$. Then $K_{i}:C(\bar{\Omega})\to  C^1(\bar{\Omega})\subset C(\bar{\Omega})$ is $e$-positive (and in particular positive), that is for each $g\in C(\bar{\Omega},\R_+)\setminus \{0\}$ there exist $\alpha_g>0$ and $\beta_g>0$ such that $\alpha_g e_i \le K_{i} g \le \beta_g e_i$. 
 \end{pro}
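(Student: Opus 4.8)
The plan is to establish the two inequalities separately, using only the linearity and positivity of $K_i$ together with the strong maximum principle and the regularity theory for \eqref{eqelliptic} recalled above. The \emph{upper} estimate is the easy half: since $0\le g\le\|g\|_{\infty}\cdot 1$ pointwise on $\bar{\Omega}$, the function $\|g\|_{\infty}\cdot 1-g$ lies in $C(\bar{\Omega},\R_+)$, so by positivity and linearity of the extended operator $K_i$ one gets $\|g\|_{\infty}e_i-K_i g=K_i\bigl(\|g\|_{\infty}\cdot 1-g\bigr)\ge 0$, i.e. $K_i g\le\|g\|_{\infty}e_i$. Hence one may simply take $\beta_g=\|g\|_{\infty}$.

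For the \emph{lower} estimate I would first reduce to a H\"older continuous right-hand side. As $g\in C(\bar{\Omega},\R_+)\setminus\{0\}$, there is an interior point where $g$ is positive, hence a closed ball $B\subset\Omega$ and $\varepsilon>0$ with $g\ge\varepsilon$ on $B$; choosing $\varphi\in C^{\hat{\mu}}(\bar{\Omega})$ with $0\le\varphi\le g$ and $\varphi\not\equiv 0$ (for instance a smooth bump supported in $B$), positivity of $K_i$ gives $K_i g\ge K_i\varphi$. Thus it suffices to produce $\alpha>0$ with $K_i\varphi\ge\alpha\, e_i$ in the case $\varphi\in C^{\hat{\mu}}(\bar{\Omega})$, $\varphi\ge 0$, $\varphi\not\equiv 0$, where now $u:=K_i\varphi$ and $e_i=K_i1$ are both \emph{classical} solutions in $C^{2+\hat{\mu}}(\bar{\Omega})$ of problems of the form \eqref{eqelliptic}.

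At this point I would invoke the strong maximum principle and the Hopf boundary point lemma, valid under the standing hypotheses (1)--(3). In $\Omega$ one has $u>0$ and $e_i>0$ (they cannot vanish in the interior without being identically zero, which is ruled out since $\varphi\not\equiv 0$, resp. $1\not\equiv0$). On $\partial\Omega$ I would split into the three admissible boundary operators: in the Neumann and regular oblique derivative cases ($\delta_i=1$, $b_i\ge 0$), a boundary point where $u$ (or $e_i$) vanishes would be a zero minimum, and then Hopf's lemma would force $\partial u/\partial\nu<0$ there, contradicting $B_iu=b_iu+\partial u/\partial\nu=0$; hence $u>0$ and $e_i>0$ on all of $\bar{\Omega}$, and $u/e_i$ is a strictly positive continuous function on the compact set $\bar{\Omega}$. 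In the Dirichlet case ($\delta_i=0$, $b_i\equiv 1$), both $u$ and $e_i$ vanish on $\partial\Omega$, but Hopf's lemma gives $\partial u/\partial\nu<0$ and $\partial e_i/\partial\nu<0$ everywhere on $\partial\Omega$; using a tubular-neighbourhood parametrization along the transversal $C^{1+\hat{\mu}}$ field $\nu$ and a first-order Taylor expansion, the quotient $u/e_i$ extends continuously and positively up to $\bar{\Omega}$. In every case, setting $\alpha:=\min_{\bar{\Omega}}u/e_i>0$ yields $u\ge\alpha\,e_i$; combining this with the reduction step and the upper bound finishes the proof, with $\alpha_g$ the constant so obtained for $\varphi$ and $\beta_g=\|g\|_{\infty}$.

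The technical heart of the argument — and the step most likely to require care — is the Dirichlet case of the boundary analysis: showing that $u/e_i$ stays bounded away from zero uniformly as $\partial\Omega$ is approached. This relies on Hopf's boundary point lemma together with the $C^1$ (indeed $C^{2+\hat{\mu}}$) regularity of $u$ and $e_i$ up to the boundary, which is precisely the regularity statement for \eqref{eqelliptic} quoted above, and the uniformity then follows from compactness of $\partial\Omega$ and the continuity of the normal derivatives. All of this is contained in the cited \cite[Lemma 5.3]{amann-JFA}, from which the statement may alternatively be quoted directly.
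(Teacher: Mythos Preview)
Your argument is correct. Note, however, that in the paper this proposition is not proved but merely quoted from \cite[Lemma~5.3]{amann-JFA}; there is no ``paper's own proof'' to compare against beyond that citation. What you have written is essentially a faithful outline of Amann's proof of that lemma: the upper bound via monotonicity of $K_i$, the reduction to H\"older data by inserting a smooth bump below $g$, and then the strong maximum principle/Hopf lemma to force $K_i\varphi\ge\alpha\,e_i$. Your case split on the boundary operator and your remark that the Dirichlet case is the delicate one (requiring $C^1$-regularity up to the boundary and compactness of $\partial\Omega$ to control the quotient $u/e_i$ near $\partial\Omega$) are exactly the points that need attention, and you have identified them correctly. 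Since you also note that one may alternatively cite the result directly, your proposal both matches and subsumes what the paper does.
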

Denote by $r(K_{i})$ the spectral radius of $K_{i}$. As a consequence of Proposition~\ref{proK2} and the Krein-Rutman theorem, it is known (for details see, for example, Lemma~3.3 of~\cite{lan2}) that  $r(K_{i})\in (0,+\infty)$ and there exists 
$\varphi_{i}\in P\setminus \{0\}$  such that 
\begin{equation}\label{igenfun}
\varphi_{i}=\mu_{i}K_{i}\varphi_{i},
\end{equation}{}
where $\mu_{i}=1/r(K_{i})$.

We utilize the space $C(\bar{\Omega},\R^n)$, endowed with the norm $\|u\|:=\displaystyle\max_{i=1,2,\ldots,n} \{\|u_i\|_{\infty}\}$, where $\|z\|_{\infty}=\displaystyle\max_{x\in \bar{\Omega}}|z(x)|$, and consider (with abuse of notation) the cone $P=C(\bar{\Omega},\R^n_+)$.

Given a nonempty set $D\subset C(\bar{\Omega},\R^n)$
we define 
$$D_I=\{u\in D: u(x)\in I \ \text{for all}\ x\in \bar{\Omega}\},$$
where $I=\prod_{i=1}^n I_i\subset \R^n,$ where each $I_i\subset \R$ is a closed nonempty interval.

Given a function $f_i:\bar{\Omega}\times I\to\R$ we define the Nemytskii (or superposition) operator $F_i$ in the following way
$$F_i(u)(x):=f_i(x,u(x)),\ \text{for}\ u\in C(\bar{\Omega}, I)\ \text{and}\ x\in \bar{\Omega}.$$

We now fix $I=\prod_{i=1}^n [0,\rho_i]$ and rewrite the elliptic System~\eqref{ellbvp-intro} as a fixed point problem in the product space of continuous functions by considering the operators $T,\Gamma:C(\bar{\Omega}, I) \to  C(\bar{\Omega},\R^n)$ given by 
\begin{align}
T(u):=(\lambda_i K_i F_i(u))_{i=1..n},\quad
\Gamma (u):=(\eta_i \gamma_i h_{i}[u] )_{i=1..n},
\end{align}
where 
$\gamma_i \in C^{2+\hat{\mu}}(\overline{\Omega})$ is the unique solution (nonnegative, due to the maximum principle, see \cite[Section~4 of Chapter~1]{Amann-rev}) of the BVP
 \begin{equation*}
 \left\{
\begin{array}{ll}
   L_{i} u(x)=0, &  x\in \Omega, \\
 B_{i} u(x)=1, & x\in \partial \Omega.
\end{array}
\right.
\end{equation*}

\begin{defn} We say that $u\in C(\bar{\Omega},I)$ is a {\it weak solution} of the System~\eqref{ellbvp-intro} if and only if $u$ is a fixed point of the operator $T+\Gamma$, that is, 
$$
u=Tu+\Gamma u=(\lambda_i K_i F_i(u)+\eta_i \gamma_i h_{i}[u] )_{i=1..n};
$$
if, furthermore, the components of $u$ are non-negative with $u_j\not\equiv 0$ for some $j$ we say that $u$ is a \emph{nonzero positive solution}.
\end{defn}

In the following Proposition we recall the main properties of the classical fixed point index for compact maps, for more details see~\cite{Amann-rev, guolak}. In what follows the closure and the boundary of subsets of a cone $\hat{P}$ are understood to be relative to $\hat{P}$.

\begin{pro}\label{propindex}
Let $X$ be a real Banach space and let $\hat{P}\subset X$ be a cone. Let $D$ be an open bounded set of $X$ with $0\in D\cap \hat{P}$ and
$\overline{D\cap \hat{P}}\ne \hat{P}$. 
Assume that $T:\overline{D\cap \hat{P}}\to \hat{P}$ is a compact operator such that
$x\neq Tx$ for $x\in \partial (D\cap \hat{P})$. Then the fixed point index
 $i_{\hat{P}}(T, D\cap \hat{P})$ has the following properties:
 
\begin{itemize}

\item[$(i)$] If there exists $e\in \hat{P}\setminus \{0\}$
such that $x\neq Tx+\lambda e$ for all $x\in \partial (D\cap \hat{P})$ and all
$\lambda>0$, then $i_{\hat{P}}(T, D\cap \hat{P})=0$.

\item[$(ii)$] If $Tx \neq \lambda x$ for all $x\in
\partial  (D\cap \hat{P})$ and all $\lambda > 1$, then $i_{\hat{P}}(T, D\cap \hat{P})=1$.

\item[$(iii)$] Let $D^{1}$ be open bounded in $X$ such that
$(\overline{D^{1}\cap \hat{P}})\subset (D\cap \hat{P})$. If $i_{\hat{P}}(T, D\cap \hat{P})=1$ and $i_{\hat{P}}(T,
D^{1}\cap \hat{P})=0$, then $T$ has a fixed point in $(D\cap \hat{P})\setminus
(\overline{D^{1}\cap \hat{P}})$. The same holds if 
$i_{\hat{P}}(T, D\cap \hat{P})=0$ and $i_{\hat{P}}(T, D^{1}\cap \hat{P})=1$.
\end{itemize}
\end{pro}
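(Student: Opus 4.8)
The plan is to derive the three properties from the basic axioms of the fixed point index for compact maps on a cone — normalization (the index of a constant map with value in $D\cap\hat{P}$ equals $1$), homotopy invariance, additivity/excision, and the solution property (a nonzero index forces a fixed point) — whose construction, via a retraction onto $\hat{P}$ and the Leray--Schauder degree, is the one carried out in \cite{Amann-rev, guolak}. Throughout, the standing hypothesis $x\neq Tx$ on $\partial(D\cap\hat{P})$ makes the index well defined; I would also record at the outset that, since $0\in D\cap\hat{P}$ and $D$ is open in $X$, the point $0$ is a relative interior point of $D\cap\hat{P}$, so $0\notin\partial(D\cap\hat{P})$.

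For $(ii)$ I would use the compact homotopy $h(t,x)=tTx$, $t\in[0,1]$, which maps $[0,1]\times\overline{D\cap\hat{P}}$ into $\hat{P}$ because $t\geq 0$. It is admissible on $\partial(D\cap\hat{P})$: a solution $x=tTx$ there with $t\in(0,1)$ would give $Tx=(1/t)x$ with $1/t>1$, contradicting the hypothesis of $(ii)$; the value $t=1$ contradicts the standing hypothesis; and $t=0$ forces $x=0\notin\partial(D\cap\hat{P})$. Hence, by homotopy invariance and normalization, $i_{\hat{P}}(T, D\cap\hat{P})=i_{\hat{P}}(\mathbf{0}, D\cap\hat{P})=1$, where $\mathbf{0}$ is the constant map $0$.

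For $(i)$ I would first pin down the translation parameter. Since $T$ is compact and $\overline{D\cap\hat{P}}$ is bounded, $M:=\sup\{\|Tx\|:x\in\overline{D\cap\hat{P}}\}<\infty$; letting $R$ bound $\|x\|$ on $\overline{D\cap\hat{P}}$ and recalling $\|e\|>0$, every $\sigma>(R+M)/\|e\|$ makes $T+\sigma e$ fixed-point free on $\overline{D\cap\hat{P}}$, because $x=Tx+\sigma e$ would force $\sigma\|e\|=\|x-Tx\|\leq R+M$. Fixing such a $\sigma$, the compact homotopy $h(t,x)=Tx+t\sigma e$, $t\in[0,1]$, maps into $\hat{P}$ (as $Tx,\sigma e\in\hat{P}$ and $\hat{P}$ is closed under addition and non-negative scaling) and is admissible on $\partial(D\cap\hat{P})$: a solution $x=Tx+t\sigma e$ with $t\in(0,1]$ contradicts the hypothesis of $(i)$ with $\lambda=t\sigma>0$, while $t=0$ contradicts the standing hypothesis. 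Therefore $i_{\hat{P}}(T, D\cap\hat{P})=i_{\hat{P}}(T+\sigma e, D\cap\hat{P})=0$, the last equality by the solution property.

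For $(iii)$ I would note first that we may assume $T$ has no fixed point on $\partial(D\cap\hat{P})$ nor on $\partial(D^1\cap\hat{P})$, since otherwise the conclusion is immediate. Then $D^1\cap\hat{P}$ and $(D\cap\hat{P})\setminus\overline{D^1\cap\hat{P}}$ are disjoint relatively open subsets of $D\cap\hat{P}$ whose union omits from $\overline{D\cap\hat{P}}$ only points of $\partial(D\cap\hat{P})\cup\partial(D^1\cap\hat{P})$, where $T$ has no fixed point; hence additivity gives
\[
i_{\hat{P}}(T, D\cap\hat{P})=i_{\hat{P}}(T, D^1\cap\hat{P})+i_{\hat{P}}\bigl(T,(D\cap\hat{P})\setminus\overline{D^1\cap\hat{P}}\bigr).
\]
In each of the two stated cases the two known indices differ, so the remaining index is nonzero and the solution property produces a fixed point of $T$ in $(D\cap\hat{P})\setminus\overline{D^1\cap\hat{P}}$. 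The only slightly delicate point in the whole argument is the boundedness/compactness estimate in $(i)$ that lets us push $T$ off itself by a large multiple of $e$; everything else is bookkeeping against the index axioms.
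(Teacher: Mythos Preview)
Your argument is correct and is the standard derivation of these three facts from the axioms of the fixed point index on cones (normalization, homotopy invariance, additivity, and the solution/existence property). Note, however, that the paper does not actually prove Proposition~\ref{propindex}: it simply \emph{recalls} the statement and refers to \cite{Amann-rev, guolak} for details. So there is no ``paper's own proof'' to compare against; what you have written is precisely the kind of proof one finds in those references, and it fills in what the paper deliberately omits.
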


With these ingredients we can now state a result regarding the existence of positive solutions for the System~\eqref{ellbvp-intro}.
\begin{thm}\label{thmsol}Let  $I=\prod_{i=1}^n [0,\rho_i]$  and assume the following conditions hold. 

\begin{itemize}
\item[(a)] 
For every $i=1,2,\ldots,n$,  $f_i\in C(\bar{\Omega}\times I)$ and $f_i\geq 0$. Set
$$
M_i:=\max_{(x,u)\in\bar{\Omega}\times I } f_i (x,u).
$$
\item[(b)] There exist $\delta \in (0,+\infty)$, $i_0\in \{1,2,\ldots,n\}$ and $\rho_0 \in (0,\displaystyle\min_{i=1..n}{\rho_i})$ such that $$f_{i_0}(x,u)\ge \delta u_{i_0},\ \text{for every}\ (x,u)\in \bar{\Omega}\times I_{0},$$
where $I_{0}:=\prod_{i=1}^n [0,\rho_0].$

\item[(c)] For every $i=1,2,\ldots,n$, $h_{i}: P_I \to [0, +\infty )$ is continuous and 
$$H_i:=\sup_{u\in P_I }h_{i}[u]<+\infty.$$

\item[(d)]  For every $i=1,2,\ldots,n$ the following two inequalities are satisfied
\begin{equation}
\label{parrange}
\frac{\mu_{i_{0}}}{\delta}\leq \lambda_{i_0}\  \text{and}\ 
\lambda_i M_i  \| K_i(1)\|_{\infty} + \eta_i H_{i}\|\gamma_i\|_{\infty} \leq \rho_i.
\end{equation}
\end{itemize}

Then the System~\eqref{ellbvp-intro} has a nonzero positive weak solution $u$ such that $$\rho_0\leq \|u\|\ \text{and}\ \|u_i\|_{\infty}\leq \rho_i, 
\ \text{for every}\ i=1,2,\ldots,n.$$
\end{thm}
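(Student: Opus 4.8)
The plan is to apply the fixed point index machinery of Proposition~\ref{propindex} to the operator $T+\Gamma$ on the cone $P=C(\bar{\Omega},\R^n_+)$, using two nested open sets. First I would check that $T+\Gamma$ maps the relevant slice of $P$ into $P$ and is compact: positivity of each $K_i$ and of $\gamma_i$ (from the maximum principle), together with (a) and (c), gives $T+\Gamma:P_I\to P$, while compactness of each $K_i$ and of $h_i$, plus continuity of the Nemytskii operators $F_i$, gives compactness of $T+\Gamma$. The key point making the index argument work on a set of the form $D_I$ is the second inequality in \eqref{parrange}: for $u\in P$ with $\|u_i\|_\infty\le\rho_i$ one has $0\le \lambda_i K_iF_i(u)+\eta_i\gamma_i h_i[u]\le (\lambda_i M_i\|K_i 1\|_\infty+\eta_i H_i\|\gamma_i\|_\infty)\,(\text{something})$; more precisely $\lambda_i K_iF_i(u)(x)\le \lambda_i M_i K_i1(x)\le \lambda_i M_i\|K_i1\|_\infty$ pointwise and similarly $\eta_i\gamma_i(x)h_i[u]\le \eta_i H_i\|\gamma_i\|_\infty$, so $(T+\Gamma)(u)$ again lies in $P_I$. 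Hence $T+\Gamma$ is a well-defined compact self-map of $\overline{D\cap P}$ where $D=\{u\in C(\bar\Omega,\R^n): \|u_i\|_\infty<\rho_i+\varepsilon \text{ for all }i\}$ intersected appropriately, or more cleanly one works directly with the set $\{u\in C(\bar\Omega,\R^n):\ u_i(x)<\rho_i\}$; I would set this up so that $(T+\Gamma)(\overline{D\cap P})\subset D_I\subset D\cap P$, which by property $(ii)$ of Proposition~\ref{propindex} (the condition $Tx\ne\lambda x$ for $\lambda>1$ is automatic since the image already lies in $\overline{D\cap P}$ and cannot be rescaled out) yields $i_P(T+\Gamma, D\cap P)=1$.

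Next I would produce the index-zero computation on a smaller set, exploiting (b) and the first inequality in \eqref{parrange}. Take $D^1=\{u\in C(\bar\Omega,\R^n):\ \|u\|<\rho_0\}$, so that $\overline{D^1\cap P}\subset D\cap P$ since $\rho_0<\min_i\rho_i$. On $\partial(D^1\cap P)$ I claim $u\ne (T+\Gamma)u+s\,e$ for all $s>0$, where $e=(e_1,\dots,e_n)$ with $e_{i_0}=\varphi_{i_0}$ (the eigenfunction from \eqref{igenfun}) and the other components zero — or any fixed element of $P\setminus\{0\}$ concentrated in the $i_0$ slot. Suppose not: then in the $i_0$-component, $u_{i_0}=\lambda_{i_0}K_{i_0}F_{i_0}(u)+\eta_{i_0}\gamma_{i_0}h_{i_0}[u]+s\varphi_{i_0}\ge \lambda_{i_0}K_{i_0}F_{i_0}(u)+s\varphi_{i_0}$. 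Since $u\in\overline{D^1\cap P}$ means $u(x)\in I_0$ for all $x$, condition (b) gives $F_{i_0}(u)(x)=f_{i_0}(x,u(x))\ge \delta u_{i_0}(x)$, so by positivity of $K_{i_0}$ and $\lambda_{i_0}\delta\ge\mu_{i_0}$ we get $u_{i_0}\ge \mu_{i_0}K_{i_0}u_{i_0}+s\varphi_{i_0}\ge s\varphi_{i_0}$. Iterating the inequality $u_{i_0}\ge\mu_{i_0}K_{i_0}u_{i_0}+s\varphi_{i_0}$ and using $\mu_{i_0}K_{i_0}\varphi_{i_0}=\varphi_{i_0}$ shows $u_{i_0}\ge (k s)\varphi_{i_0}$ for every $k\in\N$; since $\varphi_{i_0}\ne 0$ and $P$ has the Archimedean property (and is normal), this forces $u_{i_0}$ to be unbounded in norm, contradicting $u\in\overline{D^1\cap P}$. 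One also needs $u\ne(T+\Gamma)u$ on $\partial(D^1\cap P)$, which is the $s\to 0$ case and follows from the same computation (if $u=(T+\Gamma)u$ with $\|u\|=\rho_0$ then $u_{i_0}\ge\mu_{i_0}K_{i_0}u_{i_0}$, i.e. $(\mathrm{id}-\mu_{i_0}K_{i_0})u_{i_0}\ge 0$, and one argues this forces $u_{i_0}=0$, hence $\|u\|$ is attained in another component — a slightly more careful bookkeeping argument; alternatively invoke property $(i)$ directly, which subsumes the nondegeneracy hypothesis). Thus property $(i)$ of Proposition~\ref{propindex} gives $i_P(T+\Gamma, D^1\cap P)=0$.

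Finally, by property $(iii)$ of Proposition~\ref{propindex}, from $i_P(T+\Gamma,D\cap P)=1$ and $i_P(T+\Gamma,D^1\cap P)=0$ we obtain a fixed point $u\in (D\cap P)\setminus\overline{D^1\cap P}$, which is the desired nonzero positive weak solution: $u\in P$, $\|u_i\|_\infty\le\rho_i$ because $u\in\overline{D\cap P}$ and the image lands in $P_I$ (so in particular $u=(T+\Gamma)u\in P_I$), and $\|u\|\ge\rho_0$ because $u\notin\overline{D^1\cap P}$; moreover $u\not\equiv 0$ since $\|u\|\ge\rho_0>0$. I expect the main obstacle to be the careful handling of the boundary conditions $u\ne(T+\Gamma)u$ on $\partial(D^1\cap P)$ and on $\partial(D\cap P)$ — in particular making sure the sets $D$, $D^1$ are genuinely open in $C(\bar\Omega,\R^n)$ with $\overline{D\cap P}\ne P$, and verifying that the image $(T+\Gamma)(\overline{D\cap P})$ stays inside $\overline{D\cap P}$ so that property $(ii)$ applies cleanly; the Archimedean/iteration step in the index-zero computation is standard but must be phrased using only normality of $P$, which is available here.
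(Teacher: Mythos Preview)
Your approach is essentially the same as the paper's: compute index~$1$ on the large set $P_I$ via the norm estimate coming from the second inequality in~\eqref{parrange}, compute index~$0$ on the small set $P_{I_0}$ by pushing along the eigenfunction $\varphi_{i_0}$ and iterating $u_{i_0}\ge \mu_{i_0}K_{i_0}u_{i_0}+s\varphi_{i_0}$ to get $u_{i_0}\ge ks\varphi_{i_0}$ for all $k$, then apply part~(iii) of Proposition~\ref{propindex}. The only point where you hesitate unnecessarily is the nondegeneracy on $\partial P_{I_0}$ and $\partial P_I$: the paper simply observes that if $T+\Gamma$ \emph{does} have a fixed point on either boundary then that fixed point is already the desired nonzero positive weak solution (it has $\|u\|\ge\rho_0$ and $\|u_i\|_\infty\le\rho_i$), so one may assume without loss that $T+\Gamma$ is fixed-point free on $\partial P_I\cup\partial P_{I_0}$ and the index is well defined---no separate argument for the case $s=0$ is needed.
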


\begin{proof}
Take $P=C(\bar{\Omega},\R^n_+)$. Due to the assumptions above the operator $T+\Gamma$ maps $P_I$ into $P$ and is compact (the compactness of $T$ is well-known and $\Gamma$ is a finite rank operator). 
If $T+\Gamma$ has a fixed point either on $\partial {P_I}$ or $\partial {P_{I_{0}}}$ we are done. 

Assume now that $T+\Gamma$ is fixed point free on $\partial {P_I}\cup\partial {P_{I_{0}}}$, we are going to prove that $T+\Gamma$ has a fixed point in 
$ P_I\setminus (\partial {P_I}\cup P_{I_{0}})$.

We firstly prove, by means of (a), (c) and (d), that 
\begin{equation*} 
\sigma  u\neq Tu+\Gamma u\ \text{for every}\ u\in \partial P_{I}\
\text{and every}\  \sigma >1.
\end{equation*}
If this does not hold, then there exist $u\in \partial P_{I}$ and $\sigma >1$ such that $\sigma  u\neq Tu+\Gamma u$. Note that 
$\| u_j\|_{\infty} = \rho_j$ for some $j$ and
$\| u_i\|_{\infty} \leq \rho_i$ for every $i$. Furthermore
for every $x\in \overline{\Omega}$ we obtain
\begin{multline*}
\sigma u_{j}(x)=
\lambda_j K_jF_j(u)(x) + \eta_j h_{j} [u]\gamma_j(x)\leq \| \lambda_j K_jF_j(u) + \eta_j h_{j} [u]\gamma_{j}\|_{\infty}\\
\leq \|\lambda_j K_j(M_j) \|_{\infty}+ \|\eta_j H_{j}\gamma_{j}\|_{\infty} = \lambda_j M_j  \| K_j(1)\|_{\infty} + \eta_j H_{j}\|\gamma_j\|_{\infty} \leq \rho_j.
\end{multline*}
Taking the supremum over $\overline{\Omega}$ we obtain $\sigma  \rho_j\leq \rho_j$, a contradiction 
which yields $$i_{P}(T+\Gamma, P_I\setminus \partial {P_I} )=1.$$

We now consider $\varphi=(\varphi_1,\ldots,\varphi_n)$ where $\varphi_i$ is given by~\eqref{igenfun} and use (b) and (d) to show that 
\begin{equation*} 
u\neq Tu+\Gamma u +\sigma \varphi \ \text{for every}\ u\in \partial P_{I_0}\ \text{and every}\ \sigma  >0.
\end{equation*}
If not, there exists $u\in \partial P_{\rho_0}$ and $\sigma  >0$ such that
\begin{equation*} 
u= Tu+\Gamma u+\sigma  \varphi . 
\end{equation*}
Then we have $u\ge \sigma  \varphi$ and, in particular, $u_{i_0}\ge \sigma  \varphi_{i_0}$. For every $x\in \bar{\Omega}$ we have
\begin{multline*}
u_{i_0}(x)=(\lambda_{i_0}K_{i_0} F_{i_0} u)(x)+\eta_{i_0}  h_{i_0}[u]\gamma_{i_0}(x)   +\sigma  \varphi_{i_0}(x) \\
\ge (\lambda_{i_0}K_{i_0} \delta u_{i_0})(x) +\sigma  \varphi_{i_0}(x)\ge 
(\lambda_{i_0} \delta  K_{i_0}(\sigma  \varphi_{i_0}))(x) +\sigma  \varphi_{i_0}(x)\\
=  \frac{ \sigma  \lambda_{i_0} \delta}{\mu_{i_{0}}} \varphi_{i_0}  (x) +\sigma  \varphi_{i_0}(x)  \geq 2\sigma  \varphi_{i_0}(x).
\end{multline*}
By iterating the process, for $x\in \bar{\Omega}$, we get
$$
u_{i_0}(x)\ge n\sigma  \varphi_{i_0}(x) \ \text{for every}\ n\in\mathbb{N},
$$
 a contradiction, since $u$ is bounded. Thus we obtain $$i_{P}(T+\Gamma, P_{I_0} \setminus \partial {P_{I_{0}}})=0.$$
Therefore we have
$$i_{P}(T+\Gamma, P_I\setminus (\partial {P_I}\cup {P_{I_0}}))=i_{P}(T+\Gamma, P_I\setminus \partial {P_I} )-i_{P}(T+\Gamma, P_{I_0} \setminus \partial {P_{I_{0}}} )=1,$$
which proves the result.
\end{proof}

\begin{rem}
Note that, in the applications, sometimes it could be useful to replace the constants $M_i$ and $H_i$ with some majorants, say 
$\hat{M}_i$ and $\hat{H}_i$, at the cost of having to deal with the condition 
\begin{equation*}
\lambda_i \hat{M}_i  \| K_i(1)\|_{\infty} + \eta_i \hat{H}_{i}\|\gamma_i\|_{\infty} \leq \rho_i,\ \text{for every}\ i=1,2,\ldots, n,
\end{equation*}
more stringent than the corresponding
one occurring in~\eqref{parrange}.
\end{rem}
We now illustrate the applicability of Theorem~\ref{thmsol}.
\begin{ex}\label{exex}
Take $\Omega=\{x\in \mathbb{R}^2 : \|x\|_2<1\}$, and consider the system
\begin{equation} \label{example}
\left\{
\begin{array}{ll}
-\Delta u_1=\lambda_1 (|(u_1,u_2)|^\frac{1}{2} + \tan  |(u_1,u_2)| ),& \text{in }\Omega , \\
-\Delta u_2=\lambda_2 (1-\sin (u_2))|(u_1,u_2)|^2, & \text{in }\Omega , \\
u_1=\eta_1h_{1}[u],\ u_2=\eta_2h_{2}[u], & \text{on }\partial \Omega ,%
\end{array}%
\right. 
\end{equation}%
where $|(u_1,u_2)|=\max \{ |u_1|,|u_2|\}$,
$$h_{1}[u]=(u_1(0))^2+(u_2(0))^\frac{1}{2}\quad \mbox{and} \quad h_{2}[u]=(u_1(0))^\frac{1}{4}+\Bigl(\int_{\Omega}u_2(\xi)\,d\xi\Bigr)^2.$$

By direct calculation we obtain $K_1(1)=K_2(1)=\frac{1}{4}(1-x_1^2-x_2^2)$ and we may take $\gamma_1=\gamma_2\equiv 1$, this gives 
 $\|K_i(1)\|_{\infty}=\frac{1}{4}$ and $\|\gamma_i\|_{\infty}=1$ for $i=1,2$.

Fix $\rho_1,\rho_2=\frac{15 }{64}\pi$ and set
$$f_1(u_1,u_2)=|(u_1,u_2)|^\frac{1}{2} + \tan  |(u_1,u_2)|  \quad \mbox{and} \quad f_2(u_1,u_2)=(1-\sin (u_2)) |(u_1,u_2)|^2.$$

First of all note that given $\delta>0$, $f_1$ satisfies condition (b) in Theorem~\ref{thmsol} for $\rho_0$ sufficiently small, due to the behaviour near the origin. 

In the reminder of this example the numbers are rounded from above to the third decimal place unless exact. 

We have $M_1=f_1(\frac{15 }{64}\pi,\frac{15 }{64}\pi)\approx 1.765$ and $M_2=f_2(\frac{15 }{64}\pi, 0) =(\frac{15 }{64}\pi)^2 \approx 0.543$. Moreover, we can use the estimates $H_{1}\leq (\frac{15 }{64}\pi)^2+(\frac{15 }{64}\pi)^\frac{1}{2}\approx 1.401$ and $H_{2}\leq (\frac{15 }{64}\pi)^\frac{1}{4}+(\frac{15 }{64}\pi^2)^2\approx 6.278$.

By Theorem~\ref{thmsol}, the System~\eqref{example} has a nonzero positive solution $(u_1,u_2)$
such that $0<\|(u_1,u_2)\|\le \frac{15 }{64}\pi$ for every $\lambda_1,\lambda_2, \eta_1, \eta_2>0$ with
$$1.765\times\frac{\lambda_1}{4}+ 1.401\times \eta_1 \leq \frac{15 }{64}\pi\quad \text{and}\quad 0.543\times\frac{\lambda_2}{4}+ 6.278\times \eta_2 \leq \frac{15 }{64}\pi.$$
\end{ex}
We now prove, via an elementary argument, a non-existence result.
\begin{thm}\label{nonexthm}
Let $I=\prod_{i=1}^n [0,\rho_i]$ and 
assume that
for every $i=1,2,\ldots,n$ we have:
\begin{itemize}
\item
$f_i\in C(\bar{\Omega}\times I)$ and there exist $\tau_i \in (0,+\infty)$ such that
$$
0\leq f_i (x,u)\leq \tau_i u_i,\ \text{for every}\ (x,u)\in\bar{\Omega}\times I,
$$
\item
$h_{i}: P_I \to [0, +\infty )$ is continuous and there exist $\xi_i \in (0,+\infty)$
and
$$h_i[u]\leq \xi_i \|u\|, \ \text{for every}\ u \in P_I,$$
\item  the following inequality holds
\begin{equation}\label{nonexineq}
  \lambda_i \tau_i   \| K_i(1)\|_{\infty} + \eta_i \xi_i  \|\gamma_i\|_{\infty}<1.
 \end{equation}
\end{itemize}
Then the System~\eqref{ellbvp-intro} has at most the zero solution in $P_{I}$. 
\end{thm}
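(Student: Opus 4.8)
The plan is to argue directly, without topological machinery, via a contraction-type estimate: assume $u\in P_I$ is a weak solution and show that $\|u\|=0$.

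First I would write the fixed point identity $u=Tu+\Gamma u$ componentwise, so that for each $i$,
$$u_i=\lambda_i K_iF_i(u)+\eta_i\gamma_i h_i[u].$$
Since $u\in P_I$, each $u_i$ is non-negative with $\|u_i\|_\infty\le\rho_i$, and the growth hypothesis gives $0\le F_i(u)(x)=f_i(x,u(x))\le\tau_i u_i(x)\le\tau_i\|u_i\|_\infty$ for all $x\in\bar{\Omega}$. Using that $K_i$ is linear and positive (Proposition~\ref{proK2}), monotonicity yields $0\le K_iF_i(u)\le\tau_i\|u_i\|_\infty K_i(1)$, hence $\|K_iF_i(u)\|_\infty\le\tau_i\|u_i\|_\infty\|K_i(1)\|_\infty$. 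For the boundary term, the bound $h_i[u]\le\xi_i\|u\|$ together with $\gamma_i\ge 0$ gives $\|\eta_i\gamma_i h_i[u]\|_\infty\le\eta_i\xi_i\|u\|\,\|\gamma_i\|_\infty$.

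Combining these two estimates and using $\|u_i\|_\infty\le\|u\|$ produces, for every $i$,
$$\|u_i\|_\infty\le\bigl(\lambda_i\tau_i\|K_i(1)\|_\infty+\eta_i\xi_i\|\gamma_i\|_\infty\bigr)\|u\|.$$
Setting $c:=\max_{i=1,\ldots,n}\bigl(\lambda_i\tau_i\|K_i(1)\|_\infty+\eta_i\xi_i\|\gamma_i\|_\infty\bigr)$, the smallness condition~\eqref{nonexineq} forces $c<1$; taking the maximum over $i$ on the left-hand side gives $\|u\|\le c\|u\|$, and since $c<1$ this implies $\|u\|=0$, i.e.\ $u\equiv 0$ in $P_I$.

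There is no genuine obstacle here — the argument is elementary, as claimed. The only points requiring a little care are (i) invoking the positivity of $K_i$ correctly to pass from the pointwise bound on $F_i(u)$ to $K_iF_i(u)\le\tau_i\|u_i\|_\infty K_i(1)$, which uses $u_i\ge 0$ (valid since $u\in P_I$) and $F_i(u)\ge 0$, and (ii) keeping the bookkeeping straight between the componentwise norms $\|u_i\|_\infty$ and the product-space norm $\|u\|=\max_i\|u_i\|_\infty$, so that the concluding maximization step is legitimate.
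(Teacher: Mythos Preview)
Your proof is correct and follows essentially the same approach as the paper: both derive the key estimate $\|u_i\|_\infty\le(\lambda_i\tau_i\|K_i(1)\|_\infty+\eta_i\xi_i\|\gamma_i\|_\infty)\|u\|$ via positivity and linearity of $K_i$ and the growth bounds on $f_i$ and $h_i$. The only cosmetic difference is that the paper argues by contradiction, fixing an index $j$ with $\|u_j\|_\infty=\|u\|=\sigma>0$ and deducing $\sigma<\sigma$, whereas you phrase it directly by maximizing over $i$ to obtain $\|u\|\le c\|u\|$ with $c<1$.
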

\begin{proof}
Assume, on the contrary, that there exists $u\in P_{I}$, $\|u\|= \sigma>0$, such that $u=Tu+\Gamma u$. Then there exists $j$ such that  $\|u_{j}\|_{\infty}=\sigma$. For $x\in \bar{\Omega}$ we have
\begin{multline*}
u_{j}(x)=
\lambda_j K_jF_j(u)(x) + \eta_j h_{j} [u]\gamma_j(x)\leq \| \lambda_j K_jF_j(u) + \eta_j h_{j} [u]\gamma_j\|_{\infty}\\
\leq \|\lambda_j K_j(\tau_j \sigma) \|_{\infty}+ \|\eta_j \xi_j \sigma\gamma_j\|_{\infty} = ( \lambda_j \tau_j   \| K_j(1)\|_{\infty} + \eta_j \xi_j  \|\gamma_j\|_{\infty}) \sigma < \sigma.
\end{multline*}
By taking the supremum over $\overline{\Omega}$, we obtain $\sigma  < \sigma$, a contradiction.
\end{proof}
We conclude by illustrating in the next example the applicability of Theorem~\ref{nonexthm}.
\begin{ex}\label{nonex}
Take $\Omega=\{x\in \mathbb{R}^2 : \|x\|_2<1\}$ and
consider the system
\begin{equation} \label{sysnonex}
\left\{
\begin{array}{ll}
-\Delta u_1=\lambda_1 u_{1}^2\sin(u_2) , & \text{in }\Omega , \\
-\Delta u_2=\lambda_2 u_{2}^4\cos(u_1), & \text{in }\Omega , \\
u_1=\eta_1h_{1}[u],\ u_2=\eta_2h_{2}[u], & \text{on }\partial \Omega ,%
\end{array}%
\right. 
\end{equation}%
where $h_{1}[u]=u_1(0)+(u_2(0))^2$ and $h_{2}[u]=u_1(0)+(u_2(0))^3$. 
First of all note that the trivial solution satisfies the System~\eqref{sysnonex}. Let us fix $I=[0, \frac{\pi}{4}]\times [0, \frac{\pi}{2}]$ and
note that for every $(x, u_1, u_2)\in \overline{\Omega}\times [0, \frac{\pi}{4}]\times [0, \frac{\pi}{2}]$ we have
$$
0\leq u_{1}^2\sin(u_2)\leq \frac{\pi}{4}u_{1},\ 0\leq u_{2}^4\cos(u_1)\leq \frac{\pi^3}{8} u_{2}.
$$
Furthermore for, $u\in P_{I}$, we have
$$
0\leq h_{1}[u]\leq \bigl( \frac{\pi}{2}+1 \bigr)\|u\|, \ 0\leq h_{2}[u]\leq \bigl( \frac{\pi^2}{4}+1 \bigr)\|u\|.
$$
Thus, in this case, the condition~\eqref{nonexineq} reads

\begin{equation}\label{nonexineqex}
\frac{\pi}{4}  \lambda_1   + \bigl( \frac{\pi}{2}+1 \bigr) \eta_1  <1\quad \text{and} \quad  \frac{\pi^3}{8}\lambda_2   +  \bigl( \frac{\pi^2}{4}+1 \bigr) \eta_2 <1.
\end{equation}
Therefore if~\eqref{nonexineqex} is satisfied, by Theorem~\ref{nonexthm} the System~\eqref{sysnonex} admits only the trivial solution in~$P_{I}$.
\end{ex}

\section*{Acknowledgement}
G. Infante was partially supported by G.N.A.M.P.A. - INdAM (Italy).

\end{document}